% A LaTeX2e document SPG v1 2013.09.30

% Preamble
\documentclass[twoside,12pt,leqno]{amsart}
\usepackage{pstricks,graphics,epsf,graphicx,amssymb,latexsym,verbatim,enumerate}
\usepackage{hyperref}
\hypersetup{citecolor=[rgb]{1,0,0,}, linkcolor=[rgb]{0,0,1}, colorlinks=true}

\usepackage[T2A,T1]{fontenc}
\usepackage[utf8]{inputenc}
\usepackage[russian,french,english]{babel}

\theoremstyle{plain}
\newtheorem{theorem}{Theorem}
\newtheorem{lemma}[theorem]{Lemma}

\theoremstyle{definition}

\theoremstyle{remark}

\binoppenalty=10000\relpenalty=10000\hyphenpenalty=9200

\oddsidemargin 0pt
\evensidemargin 0pt
\textheight 8.1in \textwidth 6.3in

\begin{document}

\newcommand{\diag}{\textup{diag}}
\newcommand{\F}{\mathbb{F}}
\newcommand\GL{\textup{GL}}
\newcommand\n{\newline}
\newcommand\w{\,\textup{wr}\,}
\newcommand\vstr{\phantom{\kern-20pt$\sum_{i_j}^{k^l}$}}
\newcommand{\Z}{\mathbb{Z}}
\renewcommand{\ge}{\geqslant}
\renewcommand{\le}{\leqslant}
\renewcommand{\geq}{\geqslant}
\renewcommand{\leq}{\leqslant}

\begin{comment}
\end{comment}

\hyphenation{Hilfssatz uni-potent}

\title[\tiny\upshape\rmfamily The shape of solvable groups with odd order]{}

%\date{Draft printed on June 4, 2005}%\today}
%\date{Submitted: June 10, 2005} Accepted 2005.11.17 (no corrections)

\begin{center}\large\sffamily\mdseries
The shape of solvable groups with odd order
\end{center}

\author{{\sffamily S.\,P. Glasby}}

\begin{abstract}
The minimal composition length, \kern-1.4pt$c$, of a solvable
group with solvable length $d$ satisfies $9^{(d-3)/9}< c< 9^{(d+1)/5}$.
The minimal composition length, $c^o$, of a group with odd order
and solvable length $d$ satisfies $7^{(d-2)/5}< c^o< 2^d$.
\end{abstract}

\maketitle
\centerline{\noindent 2000 Mathematics subject classification:
20F16, 20F14, 20E34}

\section{Introduction}
Let $c(G)$ and $d(G)$ denote the composition length and
solvable (or derived length) of a finite and solvable $G$.
All groups in this paper are finite and solvable unless otherwise
stated. If $|G|=p_1\cdots p_r$ where the $p_i$ are primes, then
$G$ has a composition factor of order $p_i$ for $i=1,\dots,r$ and
$c(G)=r$. The derived series for $G$ is defined recursively:
$G^{(0)}:=G$ and $G^{(i+1)}:=[G^{(i)},G^{(i)}]$ for $i\ge0$. By
definition $d(G)$ is the smallest $d\ge0$ such that $G^{(d)}=1$.

We seek to understand solvable and nilpotent  groups that have solvable
length $d$, and smallest possible composition length. Set
\begin{align*}
 c_N(d)&:=\min\{c(G)\mid G\text{ is nilpotent and }d(G)=d\}\\
 c_S(d)&:=\min\{c(G)\mid G\text{ is solvable and }d(G)=d\}.
\end{align*}
Let $c^o_N(d)$ (resp. $c^o_S(d)$) be defined similarly except that $G$
ranges over nilpotent (resp. solvable) groups of {\em odd} order.
Section \ref{Solvable} is devoted to a proof of the result in the
abstract.

A simple way to construct groups with large solvable length is via
permutational wreath products. Let $S_m$ denote the symmetric group of
degree $m$. The group $S_m\w S_n$ can be viewed as an imprimitive
subgroup of $S_{mn}$. (We shall not view $S_m\w S_n$ as a subgroup of
$S_{m^n}$ with product action.) If $H\le S_m$ and $K\le S_n$ are both
transitive, then $H\w K\le S_{mn}$ is transitive and $d(H\w
K)=d(H)+d(K)$, see \cite[Corollary~1]{mN72}. The wreath product
$G_r=S_2\w\cdots\w S_2$ with $r$
copies of $S_2$ has $d(G_r)=r$ and $c(G_r)=2^r-1$. P. Hall
\cite[Satz III.2.12]{H67} showed that $G^{(i)}\le\gamma_{2^i}(G)$.
If $G$ is a $p$-group and $d(G)=d>1$, then $2^{d-1}+1\le c(G)$ holds as
$2^{d-1}\le c(G/\gamma_{2^{d-1}}(G))\le c(G/G^{(d-1)})$. The following
bounds hold for $d\ge1$:
\[
 2^{d-1}\le c_N(d)\le 2^d-1.
\]
Therefore $d=\lfloor\log_2 c_N(d)\rfloor+1$. In Section~\ref{Examples}
some general remarks concerning the sharpening bounds for $c_N(d)$
and $c_S(d)$ are made, and the difficulty of constructing
groups with solvable length $d$ and minimal composition length is
considered.

\section{Solvable groups}\label{Solvable}

Given a solvable group with solvable length $d$ and minimal
composition length $c$, we bound $d$ in terms of $c$,
and $c$ in terms of $d$.

\begin{theorem}\label{Main}
Denote by $c$ (resp. $c_0$) the minimal composition length
of a solvable group (resp. odd-order group) with solvable
length~$d\kern-1pt>\kern-2.4pt0$.~Then
\begin{itemize}
\item[(a)] $\gamma\log_2 c-\frac23< d< (\gamma+1)\log_2 c+3$ where
$\gamma=5\log_9 2\approx 1.58$,
\item[(b)] $\log_2 c_0< d< (\gamma_0+1)\log_2 c_0+2$ where
$\gamma_0=2\log_7 2\approx 0.71$.
\end{itemize}
Hence $9^{(d-3)/9}< c_S(d)< 9^{(d+1)/5}$ and
$7^{(d-2)/5}< c^o_S(d)< 2^d$ where $9^{1/9}\approx1.27\cdots$,
$9^{1/5}\approx1.55\cdots$, and $7^{1/5}\approx1.47\cdots$.
\end{theorem}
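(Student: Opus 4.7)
My plan is to prove the parametric bounds in (a) and (b) directly and then deduce the displayed bounds on $c_S(d)$ and $c^o_S(d)$ by elementary arithmetic: each displayed bound follows from its parametric counterpart by taking $\log_2$ and using $\gamma=5\log_9 2$ and $\gamma_0=2\log_7 2$.

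For the \emph{upper bounds} on $c$ and $c_0$ (the lower bounds on $d$) I would exhibit explicit families of solvable groups realising the claimed rates. Iterated wreath products of a carefully chosen building block $H$ suffice, because $d(H\w K)=d(H)+d(K)$ by \cite[Corollary~1]{mN72}, and in the imprimitive realisation $c(H\w K)=n\,c(H)+c(K)$ where $n$ is the degree of $K$. For part (a) I would choose $H$ with ratio $c(H)^{1/d(H)}$ close to $9^{1/5}$; a natural candidate is a primitive solvable affine group such as $\mathrm{AGL}(2,3)$ (which has $d=5$ and $c=7$), iterated in a suitable tower. For part (b) I would replace $H$ by an odd-order block --- for example a Frobenius group of order $21$, or an affine primitive odd-order linear group --- and iterate; any odd-order block with rate below $2$ already yields the coarse $c^o_S(d)<2^d$ bound.

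For the \emph{lower bounds} on $c$ and $c_0$, the substantial half, I would induct on $c$. Let $G$ be solvable with $d(G)=d$ and $c(G)=c$, and let $F=F(G)$. Since $G$ is solvable, $C_G(F)\le F$, so $G/F$ acts faithfully on $F/\Phi(F)$ and embeds in a direct product of general linear groups $\GL(n_p,p)$, where $n_p$ is the $p$-rank of $F/\Phi(F)$. Hall's bound $G^{(i)}\le\gamma_{2^i}(G)$ recalled in the introduction gives $d(F)\le\lceil\log_2(c(F)+1)\rceil$, while the classical estimates of Newman and Kov\'acs for the derived length of a completely reducible solvable subgroup of a general linear group in terms of its degree bound $d(G/F)$ in terms of $\sum_p n_p\le c(F)$. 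Combining via $d(G)\le d(F)+d(G/F)$ and $c=c(F)+c(G/F)$, then optimising over the split between $F$ and $G/F$, should deliver the upper bound on $d$. For part (b) I would forbid $p=2$ and invoke the odd-order refinement of the $\GL$-estimate to obtain the smaller coefficient $\gamma_0$.

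The principal obstacle will be extracting the sharp constants $\gamma=5\log_9 2$ and $\gamma_0=2\log_7 2$ rather than a weaker bound of the same logarithmic order. This amounts to pinpointing the extremal completely reducible solvable (respectively odd-order) subgroup of a small general linear group: the base $9=3^2$ signals that the extremum is attained inside $\GL(2,3)$, and the base $7$ that the odd-order extremum lives in $\GL(n,7)$ for small $n$. The heart of the work is ensuring that the constants produced by the inductive step match those realised by the explicit wreath-tower examples used in the upper-bound construction.
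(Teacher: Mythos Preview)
Your plan is essentially the paper's --- Fitting-type subgroup plus Newman/P\'alfy for the structural half, wreath towers over $\mathrm{AGL}(2,3)$ and the order-$21$ Frobenius group (the paper's $G_9$ and $H_7$) for the constructive half --- but two steps do not go through as written.

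Your optimisation ``over the split between $F$ and $G/F$'' is vacuous: both your bound on $d(F)$ and your bound on $d(G/F)$ (via $\sum_p n_p\le c(F)$) depend only on $c(F)$, so $c(G/F)$ never enters and you obtain at best $d\le(\gamma+1)\log_2 c + 9-3\gamma\approx(\gamma+1)\log_2 c+4.3$, missing the stated $+3$. The paper first reduces, using minimality of $c$, to a group with a unique minimal normal subgroup, so that $F(G)=O_p(G)=:P$ for a single prime, and then uses the finer three-term split $d(G)\le d(G/P)+1+d(\Phi(P))$: with $r=c(P/\Phi(P))$, $s=c(\Phi(P))$ and $r+s\le c$, one genuinely optimises $\gamma\log_2(r/8)+\log_2(c-r)+10$ over $r$ to get the constant $\approx2.78<3$.

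For part~(b), the wreath tower of $r$ copies of a transitive block $H\le S_m$ has $d=r\,d(H)$ but $c=c(H)(m^r-1)/(m-1)$, so its asymptotic rate is $m^{1/d(H)}$, not $c(H)^{1/d(H)}$. For $H=H_7$ this is $7^{1/2}>2$, so your suggested blocks do not yield $c_0<2^d$ (in fact no odd-order transitive block does). The paper instead uses $c^o_S(d)\le c^o_N(d)\le 2^d-1$ via Hall's $p$-groups of order $p^{2^d-1}$, noting explicitly that the $H_7$-tower gives only the weaker $\gamma_0\log_2 c_0+\tfrac23<d$.
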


\begin{proof}
(a) Suppose that $G$ is a group with solvable length $d>0$,
and minimal composition length. Then $c(G)$ equals $c:=c_S(d)$. If $N$
is a nontrivial normal subgroup of $G$, then $c(G/N)<c(G)$ and so
$d(G/N)<d(G)$. It follows that $G$ cannot have distinct minimal normal
subgroups $N_1,N_2$ otherwise $G$ embeds in $G/N_1\times G/N_2$ and
$d(G)>\max\{d(G/N_1),d(G/N_2)\}$. Let $N$ be {\em the} unique minimal normal
subgroup of $G$. As $N$ is characteristically simple, it is an
elementary abelian $p$-group for some prime $p$. Let $P$ be the
maximal normal $p$-subgroup of $G$. Then $P\ne1$, and $G$ has no normal
subgroups with order coprime to $p$. By a theorem of Hall and Higman
\cite[Hilfssatz VI.6.5]{H67} $G/P$ acts faithfully and completely
reducibly on the vector space $P/\Phi(P)$. We shall view $P/\Phi(P)$
as an $r$-dimensional
vector space over the field $\F_p$ with $p$ elements. As $P/\Phi(P)$ is
abelian $d(G)\le d(G/P)+1+d(\Phi(P))$ holds. Suppose $|\Phi(P)|=p^s$.
Then $s:=c(\Phi(P))$ and $d(\Phi(P))\le\log_2s+1$.
Because $G/P\le\GL(r,\F_p)$ is completely reducible,
\cite[Theorem C]{mN72} gives
\[
 d(G/P)\le5\log_9(r/8)+8=\gamma\log_2(r/8)+8
\]
where $\gamma=5\log_9 2$. Since
\[
  c(G)=c(G/P)+c(P/\Phi(P))+c(\Phi(P)),
\]
we see that $c=c(G/P)+r+s$ and $s\le c-r$. Therefore
\begin{align*}
 d=d(G)&\le (\gamma\log_2(r/8)+8)+1+(\log_2s+1)\\
       &\le\gamma\log_2(r/8)+\log_2(c-r)+10
\end{align*}
Using calculus, the maximum of $\gamma\log_2(r/8)+\log_2(c-r)+10$,
with $0<r<c$, occurs when $r=\gamma c/(\gamma+1)$. Thus
$d\le(\gamma+1)\log_2 c+\delta$ where
\[
 \delta=\gamma\log_2(\gamma/8)-(\gamma+1)\log_2(\gamma+1)+10\approx2.78<3.
\]
This establishes the upper bound for (a).

The lower bound for (a) is obtained by constructing five families of
transitive permutation groups $G_n\le S_n$. Our exposition is
influenced by \cite{mN72}. Let $G_9$ denote the maximal solvable
primitive permutation subgroup $\GL(2,3)\ltimes 3^2\le S_9$, and set
$G_8=\GL(2,3)\le S_8$. Let
$m=9^r$, and define $G_m\le S_m$ to be $G_9\w\cdots\w G_9$ with $r$
copies of $G_9$. Set $G_{2m}=S_2\w G_m$, $G_{3m}=S_3\w G_m$,
$G_{4m}=S_4\w G_m$, and $G_{8m}=G_8\w G_m$. Note that $d(G_m)=5r$
and $c(G_m)=7(m-1)/8$ because $|G_m|=|G_9|^{(m-1)/8}$. Thus
\[
  c_S(5r)\le 7(m-1)/8< 7m/8,\quad\textup{ and}\quad
    5\log_9 c_S(5r)-2/3< 5\log_9(7m/8)-2/3<5r
\]
because $\log_9 m=r$ and $5\log_9(7/8)-2/3\approx-0.97$. Similar
calculations are
summarized below. In each case $c(G_n)$ equals
$(k_nm-7)/8$ for some $k_n\in\Z$, and we abbreviate
$5\log_9(k_n/8)-2/3$ by $x_n$.

\begin{center}
\begin{tabular}{|c|c|c|c|c|c|} \hline
$G_n$&$d(G_n)$&$|G_n|$&$c(G_n)$&$x_n$&$\log_9 c(G_n)-\frac23$\vstr\\
\hline
$G_m$&$5r$&$|G_9|^{(m-1)/8}$&$\hfill(7m-7)/8$&$\approx-0.97$&$<5r$\\
$G_{2m}$&$5r+1$&$|S_2|^m|G_m|$&$(15m-7)/8$&$\approx0.8$&$<5r+1$\\
$G_{3m}$&$5r+2$&$|S_3|^m|G_m|$&$(23m-7)/8$&$\approx1.7$&$<5r+2$\\
$G_{4m}$&$5r+3$&$|S_4|^m|G_m|$&$(39m-7)/8$&$\approx2.9$&$<5r+3$\\
$G_{8m}$&$5r+4$&$|G_8|^m|G_m|$&$(47m-7)/8$&$\approx3.4$&$<5r+4$\\
\hline
\end{tabular}
\end{center}
In all five cases $5\log_9 c(G_n)-2/3<d(G_n)$ holds. Indeed,
if $\lceil x\rceil$ denotes the least integer $\ge x$, then
$\lceil5\log_9 c(G_n)-2/3\rceil$ equals $d(G_n)$.
Since $c_S(d(G_n))\le c(G_n)$, the lower bound in part (a)
holds for $d\ge1$.

\vskip2mm
\noindent
(b) The proof of this part follows the same pattern as part (a). The
upper bound
is proved similarly except instead of using \cite{mN72} we use
the following result \cite[Theorem 4b]{pP}:
If $G/P\le\GL(r,\F)$ and $|G/P|$ is odd, then
$d(G/P)\le2\log_7(r/5)+3=\gamma_0\log_2(r/5)+3$
where $\gamma_0=2\log_7 2$. Thus
$d\le(\gamma_0+1)\log_2 c_0+\delta_0$ where
\[
 \delta_0=\gamma_0\log_2(\gamma_0/5)-(\gamma_0+1)\log_2(\gamma_0+1)+5
          \approx1.7<2.
\]
This establishes the upper bound for (b).

The lower bound for (b) follows from the trivial observation that
$c^o_S(d)\le c^o_N(d)$, and the fact that P.~Hall \cite[Satz
III.17.7]{H67} constructed groups of order $p^{2^d-1}$ and solvable
length $d$ for each odd prime $p$. For aesthetic reasons, we outline
an argument that mirrors that in part (a) even though it gives a poorer
bound $\gamma_0\log_2 c_0+\frac23<d$ than $\log_2 c_0<d$.
We define as in \cite{pP} transitive subgroups $H_n\le S_n$ of degree
$m=7^r$, and $3m$. Set $H_m=H_7\w\cdots\w H_7$ where
there are $r$ copies of $H_7$ and
\[
  H_7=\langle (2,3,5)(4,6,7),(1,2,3,4,5,6,7)\rangle\le S_7,
\]
and set $H_{3m}=A_3\w H_m$ where $A_3$ has order~3 and degree~3. The
bound $\gamma_0\log_2 c^0_S(d)+\frac23<d$ is a consequence of
\[
 d(H_m)=2r, d(H_{3m})=2r+1, c(H_m)=\frac{m-1}3, c(H_{3m})=\frac{4m-1}3.
\]

\begin{comment}
The lower bound for (b) is obtained by constructing two families of
transitive permutation groups $H_n$ of degree $n$. We follow
\cite{pP}. Define $H_7$ to be the subgroup
$\langle (2,3,5)(4,6,7),(1,2,3,4,5,6,7)\rangle$ of order 21 of $S_7$.
Let $m=7^r$ where $r\ge0$, and define $H_m$ to be $H_7\w\cdots\w H_7$
with $r$ copies of $H_7$. Set $H_{3m}=A_3\w H_m$ where $A_3$ has
order~3 and degree~3.
Note that $d(H_m)=2r$
and $c(H_m)=(m-1)/3$ because $|H_m|=|H_7|^{(m-1)/6}$. Therefore
$c_S(2r)\le (m-1)/3< m/3$ and
\[
 \gamma_0\log_2 c_S(2r)+\frac23=
 2\log_7 c_S(2r)+\frac23<
 2\log_7(m/3)+\frac23<2r
\]
because $\log_7 m=r$ and $2\log_7(1/3)+\frac23\approx-0.5$.
Set $y_n\kern-0.6pt=\kern-0.6pt 2\log_7(k_n/3)+\frac23$
where $c(H_n)=(k_nm-1)/3$. A similar calculation gives.
\begin{center}
\begin{tabular}{|c|c|c|c|c|c|} \hline
$H_n$&$d(H_n)$&$|H_n|$&$c(H_n)$&$y_n$&$2\log_7 c(H_n)+\frac23$\vstr\\
\hline
$H_m$&$2r$&$|H_7|^{(m-1)/6}$&$\hfill(m-1)/3$&$\approx-0.5$&$<2r$\\
$H_{3m}$&$2r+1$&$|A_3|^m|H_m|$&$(4m-1)/3$&$\approx0.96$&$<2r+1$\\
\hline
\end{tabular}
\end{center}
In both cases $2\log_7 c(H_n)+\frac23<d(H_n)$ holds. Indeed,
$\lceil 2\log_7 c(H_n)+\frac23\rceil$ equals $d(H_n)$.
Since $c_S(d(H_n))\le c(H_n)$, the lower bound in part (b)
holds for $d\ge1$.
\end{comment}

It follows from $\gamma\log_2 c-\frac23< d$ and
$\log_2 c_0< d$ that $c< 9^{(d+1)/5}$ and $c_0< 2^d$. Similarly,
$d< (\gamma+1)\log_2 c+3$ and $d< (\gamma_0+1)\log_2 c_0+2$ together
with
\[
\frac19<\frac{\gamma}{5(\gamma+1)}\approx 0.122\quad\text{and}\quad
\frac15<\frac{\gamma_0}{2(\gamma_0+1)}\approx 0.208,
\]
imply $9^{(d-3)/9}<c$ and $7^{(d-2)/5}<c_0$. This completes the proof.
\end{proof}

\section{Examples}\label{Examples}

The bounds for $c_N(d)$ given in the introduction can be improved.
P.~Hall \cite[Satz III.7.10]{H67} proved $2^{d-1}+{d-1}\le c_N(d)$
by proving that a $p$-group $G$ with $G^{(i+1)}\ne 1$ satisfies
$c(G^{(i)}/G^{(i+1)})\ge 2^i+1$. No such bound exists for solvable
groups. We give an example below of an infinite residually solvable
group $G$ for which $c(G^{(2i-1)}/G^{(2i)})=1$~for~$i\ge1$.

Let $Q$ denote the quaternion group of order~8, and $E$ the
extraspecial group of order~27 and exponent~3. Denote by
$Q_n$ (resp. $E_n$) the central product of $n$ copies of $Q$
(resp. $E$) amalgamating the centers. Then $|Q_n|=2^{2n+1}$ and
$|E_n|=3^{2n+1}$. When $n=0$ the groups $Q_0$ and $E_0$ are cyclic of
order 2 and 3 respectively, and when $n>0$ both $Q_n$ and $E_n$ are
extraspecial groups. In \cite[Section~7]{GH} an iterated split extension
\[
 G=Q_0\ltimes E_0\ltimes Q_1\ltimes E_1\ltimes Q_3\ltimes
 E_4\ltimes\cdots\ltimes Q_{a_n}\ltimes E_{b_n}\ltimes\cdots.
\]
is constructed where $a_0=b_0=0$ and $a_n=3^{b_{n-1}}$,
$b_n=2^{-1+a_n}$ for $n\ge1$. The orders of the derived quotients
$G^{(i)}/G^{(i+1)}$ of $G$ are:
\[
 2,3,2^2,2,3^2,3,2^6,2,3^8,3,2^{162},2,3^{2\cdot3^{81}},3,\dots.
\]
In particular, $c(G^{(2i-1)}/G^{(2i)})=1$ for $i\ge1$. If $d\le10$ and
$d\ne7$, then $G/G^{(d)}$ has minimal composition length amongst all
solvable groups of solvable length $d$, see \cite{G05}. This is
surprising given that $c(G/G^{(d)})$ grows faster than any exponential
function of $d$. It is shown in \cite{G05} that $c_S(d)$ equals
$1,2,4,5,7,8,13,15$ when $d=1,2,3,4,5,6,7,8$.

The notation $\beta_p(d)$ is used in \cite{ENS} to denote the minimal
composition length of a $p$-group with solvable length~$d$. Clearly,
$c_N(d)=\min_p\beta_p(d)$ where $p$ ranges over the prime numbers. The
following values of $\beta_p(d)$ were known at the time of Burnside:
\[
 \beta_p(1)=1,\beta_p(2)=3,\beta_p(3)=6\text{ for $p\ge5$, and }
 \beta_2(3)=\beta_3(3)=7.
\]
It is shown in \cite{E00} that $\beta_p(4)=14$ for $p\ge5$, and in
\cite{ENS} that $\beta_p(d)\le 2^d-2$ for $p\ge5$. The best known
bounds for $c_N(d)$ are presently
\[
 2^{d-1}+3d-10\le c_N(d)\le 2^d-2.
\]
The upper bound holds when $d\ge3$, and the lower bound \cite{S99b}
which holds for $p\ge 5$ improves the bound of
Mann \cite{M00} when 
$d\ge7$. Although $c_N(d)\le c^o_N(d)$, both are $\textup{O}(2^d)$.
If it were the case that $c_S(d)$ and $c^o_S(d)$ are both
$\textup{O}(k^d)$ for some constant $k$, then Theorem~\ref{Main}
shows that $7^{1/5}<k<9^{1/5}$.

Mann~\cite{M86} investigates subgroups of $S_n$ that
have maximal order. These subgroups which are wreath
(and direct) products of $S_2,S_3$ and $S_4$, do not
improve the bound $c_S(d)=\textup{O}(9^{d/5})$ of
Theorem~\ref{Main}. The iterated wreath product $H\w\cdots\w H$ of $d$
copies of $H=S_2,S_3,S_4$ gives the bounds $c_S(d)=\textup{O}(2^d)$,
$c_S(d)=\textup{O}(3^{d/2})$ and $c_S(d)=\textup{O}(4^{d/3})$.
These bounds are less sharp because
\[
 9^{1/5}<4^{1/3}<3^{1/2}<2\quad\text{as}\quad
 1.55\cdots<1.58\cdots<1.73\cdots<2.
\]

In the proof of Theorem~\ref{Main}(a), certain groups $G_n$ were used
to establish an upper bound for $c_S(d)$. It is
natural to ask whether proper subgroups $K_n$
of $G_n$ can produce sharper upper bounds which narrow the gap in
Theorem~\ref{Main}(a). The permutation representation
$S_9\to\GL(9,\F_2)$ fixes the 8-dimensional subspace
$\{(x_1,\dots,x_9)\mid x_1+\cdots+x_9=0\}$. This observation may be
used to construct a subgroup $K_{18}$ of $G_{18}=S_2\w G_9$ of index~2
satisfying $d(G_{18})=d(K_{18})$. We define
subgroups $K_{2m}\le G_{2m}$ where $m=9^r\ge 9$ via
$K_{2(9m)}=K_{2m}\w G_9$. Although
$d(K_{2m})\kern-1.2pt=\kern-1.2ptd(G_{2m})$ and
$|G_{2m}:K_{2m}|=2^{m/18}$ is large,
$c(K_{2m})=\textup{O}(m)=c(G_{2m})$ and we obtain the same bound
$c_S(d)=\textup{O}(9^{d/5})$ as before.

It is unclear whether or not the gap $2^{d-1}+3d-10\le c_N(d)\le
2^d-2$ can be closed appreciably by examples. The group $U_n$ of
$n\times n$ unipotent upper-triangular matrices over $\F_p$ has
$c(U_n)=n(n-1)/2$ and $d(U_n)=\lfloor \log_2(n-1)\rfloor+1$. Although
$U_n$ is generated by $n-1$ elements, it has subgroups with 2 or 3
generators and maximal solvable length \cite{G99}. Estimating the
order of these subgroups appears difficult.

The wreath product $S_2\w\cdots\w S_2$, with $d$ copies of the
symmetric group $S_2$, gave rise to the bound $c_N(d)\le 2^d-1$.
It is natural to ask whether this group contains proper subgroups with
derived length $d$. The answer is negative by Lemma~\ref{CS} below.
I am grateful to Csaba Schneider for showing me a proof of
this lemma.

\begin{lemma}\label{CS}
Let $G_d=S_2\w\cdots\w S_2$ with $d$ copies of
$S_2$. Then every proper subgroup $G_d$ has solvable length less than~$d$.
\end{lemma}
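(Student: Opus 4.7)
The plan is to argue by induction on $d$; the base case $d = 1$ is immediate. For the inductive step, write $G_d = B \rtimes \langle \sigma \rangle$ where $B = G_{d-1} \times G_{d-1}$ and $\sigma$ is the swap involution, and suppose $H < G_d$. If $H \le B$, then $d(H) \le d(B) = d - 1$ and we are done. Otherwise $K := H \cap B$ has index $2$ in $H$, so $H^{(1)} \le K$; pick any $h \in H \setminus B$, which we write as $h = (g_1, g_2)\sigma$.

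Next I would analyze the projections $K_i := \pi_i(K) \le G_{d-1}$. Conjugation by $h$ acts on $B$ as the swap twisted by inner automorphisms of $G_{d-1}$, so $K_1$ and $K_2$ are conjugate in $G_{d-1}$. If $K_1 < G_{d-1}$, the inductive hypothesis gives $d(K_1) < d - 1$, whence $d(H) \le 1 + d(K) \le 1 + d(K_1) < d$ via $K \le K_1 \times K_2$. The delicate case is the subdirect one, $K_1 = K_2 = G_{d-1}$: if $K = B$ then $|H| = 2|B| = |G_d|$, contradicting properness, so $K < B$, and Goursat's lemma presents $K$ as $\{(x, y) \in B : \phi(xN) = yN\}$ for a proper normal subgroup $N \triangleleft G_{d-1}$ and an automorphism $\phi$ of $G_{d-1}/N$.

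The main obstacle is this subdirect case, since the naive bound $d(H) \le d(K) + 1$ only gives $d(H) \le d$. The tactic is to show the stronger statement that $\pi_1(H^{(1)})$ is a proper subgroup of $G_{d-1}$; induction then yields $d(\pi_1(H^{(1)})) < d - 1$, and because $H^{(1)} \le \pi_1(H^{(1)}) \times \pi_2(H^{(1)})$ with conjugate projections, $d(H^{(1)}) < d - 1$, so $d(H) < d$. To prove $\pi_1(H^{(1)})$ is proper, pass to the elementary abelian quotient $V := G_{d-1}^{\textup{ab}} \cong \F_2^{d-1}$ with $\bar N \le V$ the image of $N$. A direct commutator computation gives $\pi_1([h, (a, b)]) = g_1 b g_1^{-1} a^{-1}$ for $(a, b) \in K$, which maps to $\bar b - \bar a$ in $V$. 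Since $\pi_1([K, K]) \le G_{d-1}^{(1)}$ maps to $0$, the image of $\pi_1(H^{(1)})$ in $V$ is the subspace $\{\bar b - \bar a : (a, b) \in K\}$, which modulo $\bar N$ equals $(I - \phi^{\textup{ab}})(V/\bar N)$.

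The key input is that $h^2 \in K$ forces $\phi^2$ to be inner on $G_{d-1}/N$ (the $h$-action on $K/(N \times N) \cong G_{d-1}/N$ has square equal to conjugation by the image of $h^2$, and differs from $\phi$ by an inner automorphism), so $\phi^{\textup{ab}}$ is an involution on $V/\bar N$. In characteristic $2$, $(I - \phi^{\textup{ab}})^2 = I - (\phi^{\textup{ab}})^2 = 0$, so $I - \phi^{\textup{ab}}$ is nilpotent. Burnside's basis theorem applied to the $2$-group $G_{d-1}$ (noting $\Phi(G_{d-1}) = G_{d-1}^{(1)}$ since $G_{d-1}^{\textup{ab}}$ is elementary abelian) ensures that $N < G_{d-1}$ implies $\bar N < V$, so $V/\bar N \ne 0$; a nilpotent operator on a nonzero finite-dimensional space is not surjective, so the image of $\pi_1(H^{(1)})$ in $V/\bar N$ is proper. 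Consequently the image in $V$ is proper, and Burnside's basis theorem gives $\pi_1(H^{(1)}) < G_{d-1}$, completing the induction.
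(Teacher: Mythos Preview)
Your argument is correct, but the paper's proof is considerably shorter because it makes one reduction you skipped: it is enough to treat \emph{maximal} subgroups $M$ of the $2$-group $G_d$, and these are automatically normal of index~$2$. Writing $G_d=(H_1\times H_2)\rtimes S_2$ with $H_i\cong G_{d-1}$, normality of $M$ forces $N_i:=M\cap H_i$ to have index exactly~$2$ in $H_i$ (index~$1$ would give $H_1\times H_2\le M$, hence $M=H_1\times H_2$, which is handled separately). Then $N:=N_1\times N_2\triangleleft G_d$, the quotient $G_d/N$ is $S_2\wr S_2\cong D_8$, and $M/N$ is a proper subgroup of $D_8$, hence abelian; together with $N_i^{(d-2)}=1$ from the inductive hypothesis this gives $M^{(d-1)}=1$ in two lines. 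By contrast, you work with an arbitrary proper $H$, which lands you in the subdirect Goursat case and forces the detour through the abelianization $V\cong\F_2^{\,d-1}$, Burnside's basis theorem, and the nilpotency of $I-\phi^{\textup{ab}}$ coming from $(\phi^{\textup{ab}})^2=1$. That detour is sound (and note your worry about a single $N$ in Goursat is harmless: the two kernels are conjugate normal subgroups of $G_{d-1}$, hence equal), but the reduction to maximal subgroups avoids it entirely. What your route buys is a direct grip on $\pi_1(H^{(1)})$ for \emph{any} $H$, not just maximal ones; what the paper's route buys is brevity.
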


\begin{proof}
We use induction on $d$. The result is true when $d=1$, and when $d=2$
because proper subgroups of $S_2\w S_2$ are abelian. Assume that
$d>2$. It suffices to prove that each maximal subgroup $M$ of $G_d$
satisfies $M^{(d-1)}=1$. Write $G_d=(H_1\times H_2)\rtimes S_2$
where $H_i\cong G_{d-1}$. The result is true if $M=H_1\times
H_2$. Suppose that $M\ne H_1\times H_2$.
Set $N_i=M\cap H_i$. Since $|G_d:M|=2$, we see $|H_i:N_i|$ equals 1
or~2. The former is impossible as $M\triangleleft\, G_d$ and $M\ne
H_1\times H_2$. Thus $|H_1:N_1|=|H_2:N_2|=2$ and $N=N_1\times
N_2\triangleleft\, G_d$. Since $M/N$ is a proper subgroup of
$G/N\cong S_2\w S_2$, and $N^{(d-2)}=1$ by induction, it follows that
$M^{(d-1)}=1$.
\end{proof}

\frenchspacing

\vskip3mm
\goodbreak
\def\efont{\scriptsize\upshape\ttfamily}
{\tiny\scshape
\begin{tabbing}
\=\hspace{70mm}\=\kill
\>Department of Mathematics    \>\\
\>Central Washington University\>\\
\>WA 98926-7424, USA           \>\\
\>\efont glasbys@gmail.com\>
\end{tabbing}
}

\end{document}